\documentclass{aptpub1}
\usepackage{xcolor}
\authornames{Y.~Malinovsky and Y.~Rinott} % insert the authors here for use in running head. If three or more authors please use (for example) M.~YARROW {\it et al}. Author names should follow the same M.~YARROW format and if two authors, separate by 'AND'.
\shorttitle{Tournaments and Negative Dependence} % insert short title here for use in running head

% Put any of your own definitions here.

%\numberwithin{equation}{section}  % If you number theorems, etc. within sections,
                                   % then please uncomment this line to number
                                   % equations with sections too.

\begin{document}%\recd{}{}%Do not alter this line.

\title{On Tournaments and Negative Dependence}

\authorone[University of Maryland, Baltimore County]{Yaakov Malinovsky}
\authortwo[The Hebrew University of Jerusalem]{Yosef Rinott}

\addressone{Department of Mathematics and Statistics, University of Maryland, Baltimore County, Baltimore, USA.} % Your postal address goes here.
\emailone{yaakovm@umbc.edu} %Authors email goes here.
\addresstwo{Department of Statistics and Federmann Center for the Study of Rationality, The Hebrew University of Jerusalem, Israel.}
\emailtwo{yosef.rinott@mail.huji.ac.il}

\begin{abstract}
Negative dependence of sequences of random  variables is often an interesting characteristic of their distribution, as well as a useful tool for studying various asymptotic results, including central limit theorems, Poisson approximations, the rate of increase of the maximum, and more. In the study of probability models of tournaments,
negative dependence of  participants' outcomes arises naturally with application to various asymptotic results.
In particular, the property of \textit{negative orthant dependence} was proved in several articles  for different tournament models, with a special proof for each model. In this note we unify these results by proving a stronger property, \textit{negative association}, a generalization leading to a very simple proof. We also present a natural  example of a knockout  tournament where the scores are negatively orthant dependent but not negatively associated. The proof requires a new  result on a preservation property of negative orthant dependence that is of independent interest.
\end{abstract}
\smallskip

\keywords{Negative association; negative orthant dependence; multivariate inequalities}%insert keywords separated by a semicolon. You should avoid including keywords which also appear in the title.

\ams{62H05}{05C20, 60E15}% insert the primary 2020 Maths Subject Classification number in the first bracket
		% and the secondary ams number(s) in the second bracket
		% e.g. \ams{60E20}{49G03;49F10}
		%Maximum of three in each, ideally one or two in each primary and secondary.
		%codes found here ``https://mathscinet.ams.org/msnhtml/msc2020.pdf''

\bigskip

\section{Introduction}\label{sec:int}
\subsection{Tournaments}
A tournament consists of competitions between several players where the final score or payoff of each player is determined by the sum of scores of the player's matches. For a tournament with $n$ players, let
${\bf S}=(S_1,\ldots,S_n)$ denote the vector of their final scores.   Under natural probability models and in many kinds of tournaments, the components of ${\bf S}$ exhibit some type of negative dependence. We briefly define two concepts of dependence to be considered in this paper and then we discuss various tournaments where these concepts are relevant. We present a theorem on negative association that unifies and strengthens known results on negative dependence of tournament scores, and leads to new ones. Specifically, we prove negative association in various models.  We also analyze a tournament in which, interestingly,  negative association holds when the draw of matches is random, and otherwise only a weaker notion of negative dependence, negative orthant dependence, holds.

\subsection{Two notions of negative dependence}
We define the following negative dependence notions. See \cite{JP1983} and references therein for details.
Throughout this paper increasing (decreasing) stands for nondecreasing (nonincreasing).
\begin{definition}(\cite{JP1983}, Definition 2.3)\label{def:NOD}
	The random variables $S_1,...,S_n$ or the vector ${\bf S}=(S_1,\ldots,S_n)$ are said to be \textit{negatively lower orthant dependent} (NLOD) if for all $s_1,\ldots,s_n$ $\in \mathbb{R}$,
	\begin{equation}\label{eq:L}
%	\label{eq:iH}
	P\left(S_1 \le s_1,\ldots,S_n \le s_n\right)\leq P\left(S_1 \le s_1\right)\cdots P\left(S_n \le s_n\right),
	\end{equation}
	and \textit{negatively upper orthant dependent} (NUOD) if
	\begin{equation}\label{eq:U}
	P\left(S_1>s_1,\ldots,S_n>s_n\right)\leq P\left(S_1>s_1\right)\cdots P\left(S_n>s_n\right).
	\end{equation}
	\textit{Negative orthant dependence} (NOD) is said to hold if both \eqref{eq:L} and \eqref{eq:U} hold.
	\end{definition}

\begin{definition}(\cite{JP1983}, Definition 2.1)\label{def:NA}
	The random variables $S_1,\ldots,S_n$ or the vector ${\bf S}=(S_1,\ldots,S_n)$ are said to be {\it negatively associated} (NA) if for every pair of disjoint subsets
	$A_1, A_2$ of $\left\{1,2,\ldots,n\right\}$,
	\begin{equation}\label{eq:cov}
	Cov\left(f_1(S_i, i\in A_1), f_2(S_j, j\in A_2)\right)\leq 0,
	\end{equation}
	whenever $f_1$ and $f_2$ are real-valued functions,  increasing  in all coordinates.
\end{definition}
Clearly NA implies NOD  (see \cite{JP1983}). In Section \ref{sec:nonRknok} we provide a natural example of a tournament where $S_1,\ldots,S_n$ are NOD but not NA.

\subsection{Motivation}
\subsubsection{{\bf General dependence structure}}
The study of dependence structure between random variables and related stochastic orders is of interest in itself; see, e.g., the books \cite{MS2002, J1997, J2015}, and articles such as \cite{P2000} and \cite{D2016} which concentrate on negative dependence and its applications.
Dependence models are relevant to a large number of applications, such as system reliability and risk theory \cite{MS2002, R2013}, statistical mechanics \cite{R1999}, asymptotic approximations \cite{BS2007} and non-asymptotic bounds on the difference between certain functions of dependent random variables, and simple models with independence \cite{BHJ1992,GW2018}, in multiple testing hypotheses \cite{SC1996,RP1980,SC1997,BY2001}, various optimization problems; see, e.g., \cite{PW2015}, and geometric probability \cite{N1984}.

\subsubsection{{\bf Negative Dependence and Tornaments }}
Negative association (NA) and other concepts of negative  dependence are relevant to tournaments, as explained below. In the present  paper we unify results which appear in the literature on tournaments, and extend them to the strong notion of NA, and to general classes of tournaments.

Pemantle \cite{P2000} states that {\it "the property of NA is reasonably useful but hard to verify."} We provide simple tools and examples where NA is verified in the context of tournaments.

Huber \cite{H1963} considered a certain tournament model (details provided in the next section) where player 1 is stronger than all other players, who are all the equally strong. He proved that  $\lim_{n \rightarrow \infty}P(S_1>\max\{S_2,\ldots, S_n\}) \rightarrow 1$, that is, Player 1 achieves the highest score with probability approaching 1. His proof is based on the fact (which he proves by a special coupling argument) that  the components of $\bf S$ are NLOD; we give a simpler proof showing the stronger property of NA.
Ross \cite{R2021} studied a binomial tournament model (details provided in the next section) and established bounds for $P(S_i> \max_{j\neq i} S_j)$ using stochastic ordering property which required the knowledge
of certain negative dependence structure of the scores (see also \cite{R2016}).
Malinovsky and Moon \cite {MM2021} studied convergence in probability of the normalized maximal score  to a constant for a general tournament model (details are given in the next section) by using inequalities for the joint distribution function of the scores $S_1,\ldots, S_n$; the proof  requires the  NLOD property of the scores.
Malinovsky \cite{M2021a, M2021b} established the asymptotic distribution of the maximal score, second maximal, etc., in a chess round-robin tournament model (details provided in the next section) using a non-asymptotic bound on the total variation distance between the sum of indicators that the score of player $j$ is larger than a given constant and a suitable Poisson approximation which would hold if the indicators were independent. This bounds is based on the fact that the indicators
have certain negative dependence structure.
It follows that one can use classical limiting results under independence and show that the maximal score and related functional have Gumbel-type distribution in the limit.  In all these examples we provide a simple proof of NA which implies the required negative dependence and in the last example our proof holds for a complete range of the parameters, unlike the proof in \cite{M2021b}. Thus we unify and simplify many existing results in the literature, extending the range of tournament models and and strengthening the dependence proved.

\subsection{Constant-sum round-robin tournaments}\label{sec:RR}
%{\textbf{A general round-robin tournament}}\label{sec:grrt}
We start with a formulation of a \textit{general constant-sum round-robin tournaments}. See, e.g., \cite{Moon2013} and \cite{BF2018}. Assume that each of $n$ players competes against each of the other $n-1$ players. When player  $i$ plays against $j$, where $i<j$, player $i$'s reward is a random variable $X_{ij}$ having a distribution function $F_{ij}$ with support on $[0, r_{ij}]$, and $X_{ji}=r_{ij}-X_{ij}$; for $i<j$ this determines $F_{ji}(t)=1-F_{ij}(r_{ij}-t)$ for
$t \in [0, r_{ij}]$. Thus each pair of players competes for a share of a given reward. We assume that $X_{ij}$ are independent for $i<j$, and also that $r_{ij} \ge 0$.
The case where $r_{ij} = 0$ has the interpretation that players $i$ and $j$  do not compete against each other.
 The total reward for player $i$ is defined for  all tournaments we consider by $$S_i=\sum_{j=1, j\neq i}^{n}{X_{ij}},\quad i=1,\ldots,n.$$ The sum of the rewards is constant: $\sum_{i=1}^n S_i= \sum_{i<j}r_{ij}$.

 We shall prove that $S_1,\ldots,S_n$ are NA (Definition \ref{def:NA}), extending and simplifying various results in the literature, to be specified  below, and
more generally, if $u_i$ are increasing functions, it follows that that $u_1(S_1),\ldots, u_n(S_n)$ are also NA. These functions can represent the utilities of the players. See Proposition \ref{propgenrr} for a further generalization.

  \subsubsection{\textbf{A round-robin tournament with integer reward }}
  The case of the above round-robin tournament model with an integer support  $\{0,1,\ldots,r_{ij}\}$ of $F_{ij}$ was considered recently in Malinovsky and Moon \cite{MM2021}. Our results on negative dependence for the general round-robin tournament generalize the negative dependence results in
 \cite{MM2021}. Specifically, the NLOD property is  proved in \cite{MM2021}, and our general result yields the NA property with a simpler proof.

  We next discuss further special cases of our general formulation that have appeared in the literature.

 \subsubsection{\textbf{A round-robin tournament with pairwise repeated games}}
 %\textbf{Pairwise repeated games:}
 Recently, Ross \cite{R2021} considered a special case of  the above two models where $X_{ij}\sim Binomial(r_{ij}, p_{ij})$ independently for all $i<j$,  $r_{ij}=r_{ji}$ and $X_{ji}=r_{ij}-X_{ij} \sim Binomial(r_{ij},1-p_{ij})$.
 As always, $S_i=\sum_{j=1, j\neq i}^{n}{X_{ij}}$. This model arises if each pair of players $(i,j)$ plays $r_{ij}$ independent games, and $i$ wins with probability $p_{ij}$.   Ross \cite{R2021} obtained NOD-type  results for general $p_{ij}$ using log-concavity, conditioning, and Efron's well-known theorem \cite{EF1965}.
 Again we strengthen and simplify these results
 and prove the NA property.  Ross  used his results
 to study expressions such as $P(S_i> \max_{j\neq i} S_j)$ and related inequalities, under a special model for $p_{ij}$, given, e.g., in
 \cite{Z1929, BT1952}.

 \subsubsection{\textbf{A simple round-robin tournament}} Huber \cite{H1963} considered the above general model where for any $i \neq j$, $X_{ij}+X_{ji}=1, X_{ij}\in \left\{0,1\right\}$ and $P(X_{ij}=1)=p_{ij}$, and proved that $S_1,\ldots,S_n$ are NLOD by invoking coupling arguments.  He used the latter fact to prove that if $P(X_{1j}=1)=p>1/2$, and $P(X_{ij}=1)=1/2$ for all $1<i \neq j \le n$,  then $\lim_{n \rightarrow \infty}P(S_1>\max\{S_2,\ldots,S_n\}) \rightarrow 1$; that is, Player 1 achieves the highest score with probability approaching 1.

\subsubsection{\textbf{A chess round-robin tournament with draws}}
\label{sec:chesfoot}
Malinovsky \cite{M2021a} and \cite{M2021b} considered the following round-robin tournament model:
for $i\neq j$, $X_{ij}+X_{ji}=1,\,\,X_{ij}\in \left\{0,\, 1/2,\, 1\right\}$;  this can be seen as a special case of the general model where $F_{ij}$ have the support $\{0,\, 1/2,\, 1\}$.
Malinovsky  considered the case where all players are equally strong, i.e. $P\left(X_{ij}=1\right)=P\left(X_{ji}=1\right)$, and where the probability of a draw, $p=P\left(X_{ij}=1/2\right)$ is common to all games.
He proved
a  type of negative dependence called \textit{negative relation} which is weaker than NA (see \cite{BHJ1992}, Chapter 2)
for $S_1,\ldots,S_n$ using log-concavity
of the probability function of $2X_{ij}$, which  requires restricting the range of $p$ to $p=0$ or $p\in [1/3,1)$. He then used  results from \cite{BHJ1992} to prove a Poisson approximation to the number of times $S_i$ exceeds a certain threshold. We strengthen his result to NA, which in fact holds for all $p$ and more generally for all values of $P\left(X_{ij}=1\right)$ and $P\left(X_{ij}=1/2\right)$, that is, the above assumptions of equality of strength and a common probability of draw are dropped.

\subsection{\textbf{Random-sum n-player games}}\label{sec:RSum}
%\textbf{Fixed-sum n-player games:}
The following somewhat abstract description of a tournament is a generalization of all the above tournament models. Consider a sequence of $K$ $n$-player  games (rounds), where the random payoff to player $i \in \{1,\ldots,n\}$ in round $k \in \{1,\ldots,K\}$ is $X_{i}^{(k)}$ and the components of each of the payoff vectors  ${\bf X}^{(k)}=(X_{1}^{(k)},\ldots,X_{n}^{(k)})$ are NA, with ${\bf X}^{(k)}$'s being independent. In general, the sum of the components of each ${\bf X}^{(k)}$ is assumed to be a random variable. Constant-sum (or, equivalently, zero-sum) examples are formed  when the payoff vectors ${\bf X}^{(k)}$ have the multinomial or Dirichlet distribution (see \cite{JP1983} Section 3.1 for these and further examples).
An example where the sum of the players' payoffs in each game is random is the case where the vector ${ \bf X}^{(k)}$ is jointly normal with correlations $\le 0$ (\cite{JP1983} Section 3.4).

The total payoff to player $i$ in the $K$ rounds is $S_i=\sum_{k=1}^K X_{i}^{(k)}$,\,\,$i=1,\ldots,n$. We shall prove in Section \ref{sec:NNAARR} Theorem \ref{prop:rrsum} that $S_1,\ldots,S_n$ are NA.
More generally, one can take $S_i=u_i\left(X_{i}^{(1)},\ldots,X_{i}^{(K)}\right)$ where  $u_i$ is any increasing function of player $i$'s payoffs.
Note that here, unlike in pairwise duels, several and even  all players may compete in each round. The  limiting distribution of the number of pure Nash equilibria in such random games was studied in Rinott and Scarsini
\cite{RS2000}.

\noindent{\textbf{Two sport examples}}
 A football league (Soccer in the US) provides an example of a random-sum round-robin tournament. The winning team is awarded three points,  and if the game ends in a tie, each team receives one point. For a single match the score possibilities for the two teams are $(3, 0), (1, 1)$ and $(0,3)$ with some probabilities, forming an NA distribution for any probabilities. Let the $n$-dimensional vectors ${ \bf X}^{(\bf k)}$, for ${\bf k}=(ij)$ with $i \neq j$, consist of zeros except for two coordinates $i$ and $j$ corresponding to
the playing teams  $i$ and $j$,
where one of the above three vectors appears.  Then ${\bf S} =\sum_{{\bf k}=(i,j): 1\le i <j \le n}{\bf X}^{{(\bf k)}}$ represents the vector of total scores of the $n$ teams after they all play each other.   It is easy to see that each vector ${ \bf X}^{(\bf k)}$ is NA.
Equivalently one can assume that ${ \bf X}^{(\bf k)}$ contains the scores of all players in all matches in week $k$.

Under some assumption (which are an approximation to reality), the Association of Tennis Professionals (ATP) ranking is another example. It can be seen as a tournament in which the number of points awarded to the winner of each game depends on the tournament and the stage reached. Players' ranks are increasing functions of their total scores. Here we do not assume that each player plays against all others in the ATP ranking, which is expressed by setting some of the rewards to be zero.

\subsection{Knockout tournaments}\label{sec:KNT}
Consider a knockout tournament with $n=2^\ell$ players of equal strength; that is, player $i$ defeats player $j$ independently of all other duels with  probability 1/2 for all $1 \le i \neq j \le n$. The winner continues to a duel with another winner, and the defeated player is eliminated from the tournament.
Let $S_i$ denote the number of games won by player $i$. We could also replace $S_i$ by the prize money of player $i$, which in professional tournaments is usually an increasing function of $S_i$. For a completely random schedule of matches (aka the draw; see \cite{ACKPR2017}),
we show in Section  \ref{sec:knock} that the vector $\textbf{S}=(S_1,\ldots,S_n)$ is NA. Note that in tennis tournaments such as Wimbledon the draw is not completely random as top-seeded players' matches are drawn in a way that prevents them from playing against other top-seeded players in early rounds.   For non-random draws we prove the NOD property via a new preservation result, and we provide a counterexample to the NA property; thus it need not hold for fixed, non-random draws. We also provide an example where  NOD and NA do not hold if players are not of equal strength.

\section{Negative association and round-robin tournaments}\label{sec:NNAARR}
The following theorem generalizes Application 3.2(c) of \cite{JP1983}; it implies that the scores $S_1,\ldots, S_n$
in the general round-robin model of Section \ref{sec:RR} and therefore in all round-robin models of Section \ref{sec:RR} are NA, and therefore NLOD and NUOD, and NOD.

\begin{theorem}\label{th:propro}
	Let $X_1,\ldots,X_n$ be independent random variables  and let $g_i, i=1,2,\ldots,n$ be decreasing functions. Set $Y_1=g_1(X_1), \ldots,Y_n=g_n(X_n)$, and for $j=1,\ldots,m$ set
	$$
	S_j=f_j\left(\left\{X_i: i\in A_j\right\},\left\{Y_i: i\in B_j\right\} \right),
	$$
	where $f_j$ are coordinate-wise increasing functions of $|A_j|+|B_j|$ variables, and the sets $A_1,\ldots,A_m$ are disjoint subsets of $\left\{1,2,\ldots,n\right\}$, and so are $B_1,\ldots,B_m$. Then the random variables $S_1,\ldots,S_m$ are NA.
\end{theorem}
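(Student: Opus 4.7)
The plan is to reduce the assertion to standard closure properties of NA (Properties P2--P7 of \cite{JP1983}) by augmenting the independent family $X_1,\ldots,X_n$ with the transformed family $Y_1,\ldots,Y_n$ and treating each pair $(X_i, Y_i)$ as a single NA vector.

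First, I would verify that for every fixed $i$ the pair $(X_i, Y_i)$ is NA. The only nontrivial split of two indices is $\{1\}$ versus $\{2\}$, so one needs $\mathrm{Cov}(\phi(X_i), \psi(Y_i)) \le 0$ for all increasing $\phi, \psi$. Because $Y_i = g_i(X_i)$ with $g_i$ decreasing, $\psi(Y_i) = \psi \circ g_i(X_i)$ is a decreasing function of $X_i$, so this covariance is the covariance of an increasing and a decreasing function of the single random variable $X_i$, which is nonpositive by Chebyshev's sum inequality (the elementary one-variable FKG).

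Next, since the $X_i$ are independent, the pairs $(X_i, Y_i)$, $i=1,\ldots,n$, are independent across $i$. Using the NA closure property that a union of independent NA vectors is NA, the concatenated vector $(X_1, Y_1, X_2, Y_2, \ldots, X_n, Y_n)$ is NA as a $2n$-dimensional random vector. Now regard each $S_j$ as a function of the sub-collection of this concatenated vector indexed by $(A_j, B_j)$, where we put $X_i$ into the $A_j$-slot and $Y_i$ into the $B_j$-slot. By hypothesis $f_j$ is coordinate-wise increasing in all its arguments, so $S_j$ is an increasing function of the corresponding sub-collection. Moreover, the hypothesized disjointness of $A_1,\ldots,A_m$ together with the disjointness of $B_1,\ldots,B_m$ is exactly the statement that the index sets selecting the arguments of $S_1,\ldots,S_m$ from the concatenated list $(X_1,Y_1,\ldots,X_n,Y_n)$ are pairwise disjoint. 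Applying the closure property ``increasing functions of disjoint subsets of an NA family are jointly NA'' (\cite{JP1983}, P6/P7) completes the proof.

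The only conceptually subtle point --- which I expect to be the main obstacle to state cleanly rather than to prove --- is the bookkeeping that $A_j$ and $B_k$ are allowed to intersect, so that the same underlying $X_i$ may enter both $S_j$ (through its $X_i$ coordinate) and $S_k$ (through $Y_i = g_i(X_i)$). The trick of doubling the variables and noting that the disjointness hypothesis is imposed \emph{separately} on the $A_j$'s and on the $B_j$'s dissolves this apparent overlap, because $X_i$ and $Y_i$ live at distinct slots of the augmented NA vector. No further calculation is required.
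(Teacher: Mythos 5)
Your proposal is correct and follows essentially the same route as the paper's proof: establish that each pair $(X_i, Y_i)$ is NA (the paper does this via the independent-copy symmetrization identity, which is the same elementary fact you invoke as the one-variable Chebyshev/FKG inequality), then apply Property $P_7$ of \cite{JP1983} to get NA of the concatenated $2n$-vector, and Property $P_6$ to conclude for the $S_j$. Your explicit remark about why the separate disjointness of the $A_j$'s and of the $B_j$'s suffices even when $A_j$ and $B_k$ overlap is exactly the point the paper's argument relies on implicitly.
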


\begin{proof} The pair of variables $X,g(X)$ with $g$ decreasing is NA. This is well known; for completeness, here is a simple proof. Let $X^*$ be an independent copy of $X$. For increasing functions $f_1$ and $f_2$, we have
	$$2Cov(f_1(X), f_2(g(X)\,)=
	E\{[f_1(X)-f_1(X^*)\,][f_2(g(X))-f_2(g(X^*))\,]\}
	\le 0\,, $$
	since the expression in the expectation is $\le 0$.
	The pairs $(X_1, Y_1), \ldots,(X_n, Y_n)$ are independent and  each pair is NA. Property $P_7$ of \cite{JP1983} states that the union of independent sets of NA random variables is NA.
	Therefore the random variables $X_1,\ldots,X_n, Y_1, \ldots,Y_n$ are NA.
	Property $P_6$ in \cite{JP1983} states that increasing functions defined on disjoint subsets of a set of NA random variables are NA. Therefore   $S_1,\ldots,S_m$ are NA.
\end{proof}
We now apply Theorem \ref{th:propro} to show the NA property in the general round-robin model of Section \ref{sec:RR}.

\begin{proposition} \label{propgenrr} Let $X_{ij} \sim F_{ij}$ with support on $[0, r_{ij}]$ be independent  for $1 \le i<j \le n$, where $r_{ij}\ge 0$, and let $X_{ji}=r_{ij}-X_{ij}$.
 Set $S_i=\sum_{j=1, j\neq i}^{n}{X_{ij}},\,\,\, i=1,\ldots,n$. Then $S_1,\ldots,S_n$ are NA.
 More generally, if we set $S_i=u_i(X_{i1},\ldots,X_{i,i-1},X_{i,i+1},\ldots,X_{in})$, \,$i=1,\ldots,n$, where the $u_i$'s are any increasing functions, we again have that the variables $S_1,\ldots,S_n$ are NA.
  \end{proposition}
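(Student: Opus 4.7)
The plan is to view the proposition as a direct consequence of Theorem~\ref{th:propro}, by identifying the right underlying independent variables and the right decreasing functions. The natural choice is to take the independent inputs to be $\{X_{ij}: 1\le i<j\le n\}$ (these are independent by assumption), and, for each such pair, the decreasing function $g_{ij}(x)=r_{ij}-x$. Setting $Y_{ij}=g_{ij}(X_{ij})=r_{ij}-X_{ij}$, we then have, by the definition of the round-robin model, $Y_{ij}=X_{ji}$ for $i<j$. So every one of the variables $X_{k\ell}$ appearing in the scores equals either an underlying $X_{ij}$ (when $k<\ell$, with $(i,j)=(k,\ell)$) or an underlying $Y_{ij}$ (when $k>\ell$, with $(i,j)=(\ell,k)$).

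Given this, rewrite the $k$-th score as
\[
S_k=\sum_{\ell>k}X_{k\ell}+\sum_{\ell<k}X_{k\ell}=\sum_{\ell>k}X_{k\ell}+\sum_{\ell<k}Y_{\ell k},
\]
which expresses $S_k$ as an increasing function of the $X_{ij}$ indexed by the set $A_k=\{(k,\ell):\ell>k\}$ and the $Y_{ij}$ indexed by $B_k=\{(\ell,k):\ell<k\}$. The key point to check is that the families $\{A_k\}_{k=1}^n$ and $\{B_k\}_{k=1}^n$ are each disjoint. For $A_k$ this is immediate because the pairs in $A_k$ all have first coordinate $k$, so $A_k\cap A_{k'}=\emptyset$ whenever $k\neq k'$; for $B_k$ the pairs all have second coordinate $k$, giving the analogous disjointness. (The theorem does not require $A_k$ and $B_k$ to be disjoint from each other, and indeed here they typically are not.) With these ingredients in hand, Theorem~\ref{th:propro} applies and yields the NA property of $S_1,\ldots,S_n$.

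For the more general assertion, if $u_i$ is any coordinate-wise increasing function on the $n-1$ variables $(X_{i1},\ldots,X_{i,i-1},X_{i,i+1},\ldots,X_{in})$, then viewed as a function of the underlying $X_{ij}$'s ($i<j$) and $Y_{ij}$'s, $u_i$ is still coordinate-wise increasing, with the same index sets $A_i$ and $B_i$ as above. The disjointness argument is unchanged, so Theorem~\ref{th:propro} again yields NA.

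There is no substantive obstacle here; the only thing requiring a bit of care is the asymmetric bookkeeping between $i<j$ and $i>j$, that is, recognizing each $X_{k\ell}$ as either a primary variable $X_{ij}$ (for $i<j$) or a decreasing image $Y_{ij}$ of one, and then verifying that this bookkeeping yields disjoint index sets on each side. Once the correct labelling is in place, the proof is one application of the preceding theorem.
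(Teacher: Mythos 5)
Your proposal is correct and follows essentially the same route as the paper: both apply Theorem~\ref{th:propro} to the independent doubly indexed variables $X_{ij}$, $i<j$, with $g_{ij}(x)=r_{ij}-x$ producing $X_{ji}$, and then observe that each $S_k$ is an increasing function of disjoint index sets. Your explicit verification that the $A_k$'s (respectively $B_k$'s) are pairwise disjoint is a slightly more careful spelling-out of what the paper states in one line.
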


\begin{proof}
	Instead of a single index we apply Theorem \ref{th:propro} to the independent doubly indexed random variables $X_{ij}$ for $i<j$. Let $g_{ij}(x)=r_{ij}-x$, so that $X_{ji}=g_{ij}(X_{ij})=r_{ij}-X_{ij}$ with $X_{ji}$ playing the role of $Y$'s in Theorem \ref{th:propro}.
	Since the $S_i$'s are sums of disjoint subsets of the variables defined above, the result follows by Theorem \ref{th:propro}, and the same argument holds  with the  functions $u_i$  replacing the sums.
\end{proof}

Since all round-robin models of Section \ref{sec:RR}  are special cases of the general round robin model, we have:
\begin{corollary}
The NA property for $S_1,\ldots,S_n$ holds in all the round-robin models in Section \ref{sec:RR}. The NLOD results proved in the literature for these models follow; moreover, NUOD and hence NOD also follow.
\end{corollary}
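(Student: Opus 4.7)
My plan is to verify that each of the four specific round-robin models listed in Section \ref{sec:RR} is a special case of the general framework of Proposition \ref{propgenrr}, and then invoke the known implication that NA implies NOD. Since each model specifies the distributions $F_{ij}$ of the pairwise rewards $X_{ij}$ and imposes $X_{ji} = r_{ij} - X_{ij}$, this reduces to a matching exercise against the hypotheses of the proposition.

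For the first sentence of the corollary I would enumerate the models explicitly. The integer-reward model of \cite{MM2021} corresponds to $F_{ij}$ supported on $\{0, 1, \ldots, r_{ij}\}$. Ross's repeated-games model \cite{R2021} is the case $X_{ij} \sim Binomial(r_{ij}, p_{ij})$ with $r_{ij} = r_{ji}$, for which the constant-sum constraint automatically yields $X_{ji} \sim Binomial(r_{ij}, 1-p_{ij})$. Huber's simple model \cite{H1963} is the sub-case $r_{ij} = 1$ with $F_{ij}$ Bernoulli$(p_{ij})$. The chess-with-draws model of \cite{M2021a, M2021b} is the case $r_{ij} = 1$ with $F_{ij}$ supported on $\{0, 1/2, 1\}$. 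In each case the independence of the $X_{ij}$ across pairs $i<j$ is built into the model, and $S_i = \sum_{j \ne i} X_{ij}$; therefore Proposition \ref{propgenrr} applies directly and gives NA of $S_1, \ldots, S_n$.

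For the second sentence, NA implies both NLOD and NUOD (and hence NOD) by the standard implication recorded right after Definition \ref{def:NA}. Applying \eqref{eq:cov} to the increasing bounded indicators $\mathbf{1}\{S_i > s_i\}$ and iterating over nested partitions of $\{1, \ldots, n\}$ (first $A_1=\{1\}$ versus $A_2=\{2,\ldots,n\}$, then peeling off $\{2\}$, and so on) yields the product bound \eqref{eq:U}; the analogous argument using the decreasing indicators $\mathbf{1}\{S_i \le s_i\}$ (legitimate because negating both factors leaves the covariance unchanged and produces increasing functions) yields \eqref{eq:L}. This recovers every NLOD result cited in the introduction and supplies the matching NUOD conclusion at no extra cost.

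I would expect no real obstacle: the substantive content was entirely absorbed by Proposition \ref{propgenrr}, and what remains is a routine model-by-model inspection together with the standard passage from NA to NOD. The only point that requires a line of verification is the Binomial case, where one must check that the definition $X_{ji}=r_{ij}-X_{ij}$ is consistent with the stated marginal distribution of $X_{ji}$, and this is immediate from the symmetry of the Binomial family under $p \leftrightarrow 1-p$.
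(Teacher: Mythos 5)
Your proposal is correct and follows essentially the same route as the paper: the paper's proof of this corollary is precisely the observation that every model in Section \ref{sec:RR} is a special case of the general constant-sum round-robin model, so Proposition \ref{propgenrr} gives NA, and NOD then follows since NA implies NOD. Your additional details (the model-by-model matching and the explicit indicator-function argument for NA implying NLOD and NUOD) are all sound but are left implicit or cited to \cite{JP1983} in the paper.
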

The football example of Section \ref{sec:RSum} is not a special case of the constant-sum  general round-robin model; here the NA property follows by Theorem  \ref{th:propro} by replacing the  functions $g_{ij}$ by $g$ defined by $g(3)=0, g(1)=1$, and $g(0)=3$. It also follows by Theorem \ref{prop:rrsum} below.

We now consider the random-sum $n$-player games tournament of Section \ref{sec:RSum}.

\begin{theorem}\label{prop:rrsum}
Consider the random-sum tournament model of Section \ref{sec:RSum}, that is, a sequence of $K$ $n$-player  games (rounds), where the random payoff to player $i \in \{1,\ldots,n\}$ in round $k \in \{1,\ldots,K\}$ is $X_{i}^{(k)}$ and the components of each payoff vector  ${\bf X}^{(k)}=\left(X_{1}^{(k)},\ldots,X_{n}^{(k)}\right)$ are NA. The vectors $\textbf{X}^{(k)}$ are distributed independently. Let $S_i=\sum_{k=1}^K X_{i}^{(k)}$.
Then $S_1,\ldots,S_n$ are NA. More generally, the variables  $S_i=u_i\left(X_{i}^{(k)},\ldots,X_{i}^{(K)}\right)$, \,$i=1,\ldots,n$, where the $u_i$'s are any increasing functions, are NA.
\end{theorem}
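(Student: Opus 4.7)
The plan is to derive Theorem \ref{prop:rrsum} as an immediate consequence of the two closure properties of NA already invoked in the proof of Theorem \ref{th:propro}, namely properties $P_6$ and $P_7$ of \cite{JP1983}. No new NA construction is needed because the hypothesis already grants that each round's payoff vector ${\bf X}^{(k)}$ is NA; the only work is to combine rounds and then pool within each player.

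First, I would use the independence of ${\bf X}^{(1)},\ldots,{\bf X}^{(K)}$ together with the assumed NA of each ${\bf X}^{(k)}$. By property $P_7$ of \cite{JP1983} (the union of independent collections of NA random variables is NA), the entire double-indexed array $\{X_i^{(k)}: 1 \le i \le n,\ 1\le k\le K\}$ is NA.

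Second, for each player $i$, the coordinates $\{X_i^{(k)}: 1 \le k \le K\}$ form a subset of this array, and these $n$ subsets (indexed by $i$) are pairwise disjoint. The sum $S_i=\sum_{k=1}^K X_i^{(k)}$ is a coordinate-wise increasing function of the variables in the $i$-th subset, and the same holds for $u_i(X_i^{(1)},\ldots,X_i^{(K)})$ when $u_i$ is any increasing function. Property $P_6$ of \cite{JP1983} (increasing functions defined on disjoint subsets of NA variables are NA) then immediately yields that $S_1,\ldots,S_n$ are NA, in both the sum case and the more general $u_i$ case.

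There is no substantive obstacle: the argument is essentially a two-line invocation of $P_6$ and $P_7$. The only point to verify carefully is that the per-player index sets $\{X_i^{(k)}\}_{k=1}^K$ are genuinely disjoint across $i$, which is immediate from the indexing. The real content of the theorem lies not in the proof itself but in the unifying observation that a great many tournament models (including the constant-sum round-robin of Proposition \ref{propgenrr}, the football example, and the random-sum examples of Section \ref{sec:RSum}) can be cast into this framework once one checks that each round's payoff vector is NA.
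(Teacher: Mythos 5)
Your proposal is correct and follows essentially the same route as the paper, which proves the theorem (restated as Lemma \ref{le:conNA} on convolutions of NA vectors) by first applying property $P_7$ of \cite{JP1983} to the independent NA vectors ${\bf X}^{(1)},\ldots,{\bf X}^{(K)}$ and then property $P_6$ to the increasing functions $S_i$ of the disjoint per-player index sets. The extension to general increasing $u_i$ is handled identically in both arguments.
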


The above theorem can be restated in the following lemma, which follows readily from properties of negative association given in \cite{JP1983}.
The same result for positive association, with the same proof, is given in \cite{KR1980} Remark 4.2.
\begin{lemma}\label{le:conNA}
	The convolution of NA vectors is NA.
	\end{lemma}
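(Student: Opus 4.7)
The plan is to derive Lemma \ref{le:conNA} (and hence Theorem \ref{prop:rrsum}) as an immediate combination of two standard preservation properties of NA collections recorded in \cite{JP1983}, exactly as done in the proof of Theorem \ref{th:propro}. Specifically, I would invoke property $P_7$ (the union of independent NA sets is NA) and property $P_6$ (increasing functions of disjoint subsets of an NA set are NA).

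First, I would set up notation: let $\mathbf{X}^{(1)},\ldots,\mathbf{X}^{(K)}$ be independent $n$-dimensional random vectors with each $\mathbf{X}^{(k)}=(X_1^{(k)},\ldots,X_n^{(k)})$ being NA. Viewing the doubly indexed family $\{X_i^{(k)} : 1\le i\le n,\, 1\le k\le K\}$ as a union of the $K$ independent blocks, property $P_7$ immediately yields that the entire doubly indexed collection is NA as a single family of $nK$ random variables.

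Next, I would observe that the groups of indices
$$T_i := \{(i,k) : 1 \le k \le K\}, \qquad i=1,\ldots,n,$$
are pairwise disjoint subsets of the full index set $\{1,\ldots,n\}\times\{1,\ldots,K\}$. Since $S_i = \sum_{k=1}^K X_i^{(k)}$ is a coordinate-wise increasing function of the variables indexed by $T_i$, property $P_6$ applied with the partition $\{T_1,\ldots,T_n\}$ delivers that $S_1,\ldots,S_n$ are NA. This proves Lemma \ref{le:conNA}; the more general statement in Theorem \ref{prop:rrsum} with $S_i=u_i\bigl(X_i^{(1)},\ldots,X_i^{(K)}\bigr)$ follows by exactly the same argument, since each $u_i$ is assumed increasing and still depends only on variables indexed by the disjoint block $T_i$.

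There is essentially no obstacle once $P_6$ and $P_7$ are in hand; the only point requiring a moment of care is the disjointness hypothesis in $P_6$, which holds here because the index blocks $T_i$ are disjoint across $i$, even though each $S_i$ is built from variables coming from all $K$ rounds. This is also why the argument works for random-sum games: we never needed the sum $\sum_i X_i^{(k)}$ to be constant within a round, only that the within-round vector $\mathbf{X}^{(k)}$ be NA.
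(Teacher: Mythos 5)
Your proposal is correct and follows exactly the paper's own argument: apply property $P_7$ of \cite{JP1983} to conclude that the union of the $K$ independent NA blocks is NA, then property $P_6$ with the disjoint index blocks $T_i$ to conclude that the $S_i$ (or more generally the $u_i$) are NA. The only difference is that you spell out the disjointness bookkeeping more explicitly than the paper does.
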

\begin{proof}
	Let ${\bf X}^{(k)} \in \mathbb{R}^n$ be independent NA vectors and let ${\bf S}=(S_1,\ldots,S_n)=\sum_{k=1}^K \textbf{X}^{(k)}$.  By Properties $P_7$ and then $P_6$ of \cite{JP1983}, the union of all variables in these vectors is NA and hence $S_1,\ldots,S_n$ are NA since they are increasing functions of disjoint subsets of the above union.
	\end{proof}
The above argument holds also when $S_i=u_i\left(X_{i}^{(1)},\ldots,X_{i}^{(K)}\right)$, thus proving the last part of Theorem \ref{prop:rrsum}.

The next corollary shows that the NA property of the general round-robin model of Section \ref{sec:RR} and hence in all the models of \ref{sec:RR} follows also from Theorem \ref{prop:rrsum}.

\begin{corollary}\label{cor:RSNA}
	The scores $S_1,\ldots,S_n$ of the general round-robin models in Section \ref{sec:RR} are NA.
\end{corollary}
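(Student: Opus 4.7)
The plan is to realize the general round-robin model as a special case of the random-sum $n$-player game of Section \ref{sec:RSum} and then directly invoke Theorem \ref{prop:rrsum}. The indexing is natural: instead of rounds $k=1,\ldots,K$, take one ``round'' per unordered pair $\{i,j\}$ with $1\le i<j\le n$, i.e.\ $k=(i,j)$.

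For each such $k=(i,j)$, I would define the $n$-dimensional payoff vector $\mathbf{X}^{(k)}$ to be identically zero except in coordinates $i$ and $j$, where it equals $X_{ij}$ and $X_{ji}=r_{ij}-X_{ij}$ respectively. Summing over all pairs then gives, in coordinate $\ell$, precisely $\sum_{j>\ell}X_{\ell j}+\sum_{i<\ell}X_{\ell i}=\sum_{j\neq\ell}X_{\ell j}=S_\ell$, so $\mathbf{S}=\sum_{k}\mathbf{X}^{(k)}$ as desired. The independence of the $X_{ij}$ for $i<j$ gives the independence of the vectors $\mathbf{X}^{(k)}$ required by Theorem \ref{prop:rrsum}.

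The only substantive step is to check that each $\mathbf{X}^{(k)}$ is NA as an $n$-vector. Its nonzero coordinates form the pair $(X_{ij},\, r_{ij}-X_{ij})$, which is of the form $(X,g(X))$ with $g$ decreasing, hence NA by the argument already given at the start of the proof of Theorem \ref{th:propro}. The remaining $n-2$ coordinates of $\mathbf{X}^{(k)}$ are deterministic constants, and adjoining constants to an NA collection preserves NA (they are independent of the rest, so Property $P_7$ of \cite{JP1983} applies, noting that a single constant random variable is trivially NA). Thus $\mathbf{X}^{(k)}$ is NA.

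Having verified the hypotheses of Theorem \ref{prop:rrsum}, that theorem (equivalently, Lemma \ref{le:conNA}) immediately yields the NA property of $S_1,\ldots,S_n$. I do not expect any real obstacle here; the main point is simply the bookkeeping that identifies the pairwise-duel model as a random-sum model whose summand vectors are each built from an NA pair padded with constants. The more general statement with increasing $u_i$ replacing sums follows in the same way from the general form of Theorem \ref{prop:rrsum}.
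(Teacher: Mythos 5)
Your proposal is correct and is essentially identical to the paper's own proof: the paper constructs the same pair-indexed vectors ${\bf Y}^{ij}$ (with $X_{ij}$ in coordinate $i$, $r_{ij}-X_{ij}$ in coordinate $j$, zeros elsewhere) and applies Theorem \ref{prop:rrsum}. Your only addition is to spell out why each padded vector is NA, which the paper simply declares obvious.
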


\begin{proof} For clarity we start with the simple case of $n=3$. Define the vectors ${\bf Y}^{12}=(X_{12}, r_{12}-X_{12}, 0),\,\, {\bf Y}^{13}=(X_{13}, 0, r_{13}-X_{13})$, and  ${\bf Y}^{23}=(0, X_{23}, r_{23}-X_{23})$ with $X_{ij}$ of the general round-robin model. It is easy to see that $S_i=\sum_{1 \le k<\ell \le 3} Y^{kl}_i$.
   	
In general, starting with the rewards $X_{ij}$ of the general round-robin model, form  the $K=:n(n-1)/2$ vectors  ${\bf Y}^{ij} \in \mathbb{R}^n$,\,$1 \le i < j \le n$,  whose $i$th component, $Y^{ij}_i$, equals $X_{ij}$, its $j$th component,  $Y^{ij}_j$, equals $r_{ij}-X_{ij}$, and the remaining components equal zero. The components $(Y^{ij}_1,\ldots, Y^{ij}_n)$ of each of the $K$ vectors ${\bf Y}^{ij}$ are obviously NA. Setting
 $$S_i=\sum_{1 \le k < \ell \le n} Y^{kl}_i, \,\,i=1,\ldots,n,$$
 it is easy to see that these $S_i$ coincide with those of the general round-robin model.
 Theorem \ref{prop:rrsum} applied to the K vectors ${\bf Y}^{kl}$ implies that the variables $S_i$ are NA.
\end{proof}

\section{Knockout tournaments}\label{sec:knock}
We now discuss negative dependence in the knockout tournament of Section \ref{sec:KNT}.
\subsection{Knockout tournaments with a random draw}
\begin{proposition}
\label{prop:3}
Consider a 	knockout tournament starting with $n=2^\ell$ players, where player $i$ defeats player $j$ independently of all other duels with  probability 1/2 for all $1 \le i \neq j \le n$;  the winner continues to a duel with another winner, and the defeated player is eliminated from the tournament. Let $S_i$ denote the number of games won by player $i$. Assume a completely random schedule (draw) of the matches. Then $S_1,\ldots,S_n$ are NA.
\end{proposition}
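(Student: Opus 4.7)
The plan is to reduce Proposition \ref{prop:3} to the classical fact that the uniform permutation distribution of any fixed vector is NA; see, e.g., Section 3.1 of \cite{JP1983}. My claim will be that $(S_1,\ldots,S_n)$ is exactly such a distribution, so the proof then consists of two elementary observations.

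First I would observe that the multiset $\{S_1,\ldots,S_n\}$ is deterministic. Because exactly $n/2^k$ players survive round $k$ in a knockout tournament with $n=2^\ell$ players, the number eliminated in round $k+1$ is $n/2^{k+1}$, and each such player finishes with score exactly $k$. Hence, for $k=0,1,\ldots,\ell-1$, exactly $n/2^{k+1}$ coordinates of the score vector equal $k$, and exactly one coordinate (the champion) equals $\ell$. Let $\mathbf{t}$ denote the fixed vector encoding these multiplicities in any chosen order.

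Next I would verify that the law of $(S_1,\ldots,S_n)$ is exchangeable. This is immediate from the symmetries of the model: each match is symmetric in its two participants because coins are fair, and a completely random draw places players into bracket positions in a fully label-invariant manner, so relabeling the players leaves the distribution of the score vector unchanged. Combining exchangeability with the fact that the support is contained in the orbit of $\mathbf{t}$ under coordinate permutations, the distribution must be uniform on that orbit: any two elements of the orbit can be mapped to one another by some coordinate permutation, and exchangeability forces their probabilities to coincide. Hence $(S_1,\ldots,S_n)$ has the uniform permutation distribution of $\mathbf{t}$, and the NA property follows from the result cited above in \cite{JP1983}.

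I do not foresee a genuine obstacle; the only point that needs care is the symmetry argument, which leans essentially on both hypotheses of the proposition. The equal-strength assumption is what forces the multiset to be deterministic; the uniform draw is what makes the distribution exchangeable. Both hypotheses are essential, which is consistent with the counterexamples announced in Section \ref{sec:KNT} for non-random draws and for non-equal strengths. Unlike the proofs of the round-robin results in the previous section, no appeal to Theorem \ref{th:propro} or Theorem \ref{prop:rrsum} is needed here: the NA of permutation distributions is a separate, self-contained ingredient in the toolbox of \cite{JP1983}.
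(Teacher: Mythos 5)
Your proposal is correct and follows essentially the same route as the paper's own proof: identify the fixed multiset of scores $\{0,\ldots,0,1,\ldots,1,\ldots,\ell\}$ with multiplicities $2^{\ell-1-k}$ for score $k<\ell$, use the symmetry of the fair coins and the random draw to conclude that $(S_1,\ldots,S_n)$ is a uniformly random permutation of this list, and invoke the result of \cite{JP1983} (their Theorem 2.11) that permutation distributions are NA. Your explicit exchangeability argument merely spells out what the paper summarizes as ``all players play a symmetric role.''
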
	
\begin{proof}
First note that for a given $\ell$,  the vector ${\bf S}=(S_1,\ldots,S_n)$ contains the components $i=0,\ldots, \ell$ with $i <\ell$ appearing $2^{\ell-1-i}$ times, and $\ell$ appearing once. For example, if $n=4$ ($\ell=2$) then there are 2 players with 0 wins, 1 player (the losing finalist) with 1 win, and 1 player  (the champion) with 2 wins. Thus, the vector $\textbf{S}$ is a permutation of the vector $(0,0,1,2)$.
If $n=8$  ($\ell=3$) then $\bf{S}$ is a permutation of the vector $(0,0,0,0,1,1,2,3)$.
Under the assumption of a random draw, all permutations are equally likely as all players play a symmetric role.
Theorem 2.11 of \cite{JP1983} states that if ${\bf X}=(X_1,\ldots,X_n)$ is a random permutation of a given list of real numbers, then $\textbf{X}$ is NA, and the result follows.
\end{proof}

Without the assumption that players have equal probabilities in each duel, negative association
as in Proposition \ref{prop:3} need not hold. To see this consider the case of 4 players and assume
first that the relations between the players are deterministic; specifically, Player 1 beats Player 2 with probability 1 and loses to 3 and 4 with probability 1. Player 2 beats 3 and 4 with probability 1, and Player 3 beats 4 with probability 1. These relations are not transitive (for example, Player 1 beats 2 who beats 3, but 3 beats 1), which is not uncommon in various sports.
With a random draw, the vector $S=\left(S_1, S_2, S_3, S_4\right)$ can only take the outcomes
$\left(1, 0, 2, 0\right)$ (when Player 1 meets 2 in the first round), $\left(0, 2, 1, 0\right)$ (1 meets 3 in the first round) and $\left(0, 2, 0, 1\right)$ (1 meets 4 in the first round), each with probabilities $1/3$.
Let $f_1(S_1)=S_1$ and $f_2(S_3)=S_3$. Then $Ef_1(S_1)f_2(S_3)=2/3>Ef_1(S_1)Ef_2(S_3)=1/3$,
whereas $Ef_1(S_1)=1/3$ and $Ef_2(S_3)=1$, which contradicts negative association. If we replace the
probabilities of 1 by $1-\varepsilon$ for small $\varepsilon$ then the same result holds by an obvious continuity argument, so deterministic relations are not necessary for this example.
In the above example the vector ${\bf S}$ is not even NLOD. In fact
$P(S_1\leq 0, S_3\leq 0)=1/3 > 2/3\cdot1/3=P(S_1\leq 0)P(S_3\leq 0).$

\subsection{Knockout tournaments with a non-random draw}\label{sec:nonRknok}
This section provides a counterexample showing  that for knockout tournaments with a given non-random draw, the scores $S_1,\ldots,S_n$ need not be NA; however, we prove that they are NOD. To obtain the latter result we prove a result on NOD (and NLOD and NUOD) of independent interest.

\noindent \textbf{A counterexample to NA}
 Consider a knockout tournament with $n=4$ players and a draw where in the first round player 1 plays against 2, and 3 against 4. In this case only 8 permutations of $(0,0,1,2)$ are possible and one of the first two coordinates must be positive and so $(0,0,1,2)$ itself is is not a possible outcome.  Consider the functions $f_1(S_1,S_3)$ taking the value 0 everywhere, except that $f_1(0,1)=f_1(0,2)=1$, and $f_2(S_2,S_4)$ which is 0 everywhere, except for $f_2(2,0)=1$. We have $E[f_1(S_1,S_3)f_2(S_2,S_4)]=1/8$, $Ef_1(S_1,S_3)=2/8$, and $Ef_2(S_2,S_4)=1/8$, and \eqref{eq:cov} does not hold. \hspace{11.3cm} $\square$\\
In a tennis tournament, the above arrangement of matches occurs if Players 1 and 3 are top-seeded and the draw prevents them from being matched against each other in the first round.

Finally, we prove that in a knockout
tournament with a non-random schedule, $\textbf{S}=(S_1,\ldots,S_n)$ is NOD. We need the following theorem, which may be of independent interest.
%\textcolor{red}{Yakov : can you find a better/another reference? is it in Muller and Stoyan which I don't have?}
\begin{theorem}\label{th:presNOD} Let ${\bf X}^{(k)} = \left(X^{(k)}_{1},\ldots,X_{n}^{(k)}\right)\in \mathbb{R}^n$, $k=1,\ldots,K$ satisfy the following two assumptions.
	\begin{itemize}
		\item[\rm{(\bf i)}]
		For all $k=1,\ldots,K$:\,\,	${\bf X}^{(k)}\mid{\bf X}^{(k-1)}+\ldots+{\bf X}^{(1)}$   is NLOD, and
		\item[\rm{(\bf ii)}]
		For all $k$ and $i$: ${X}_{i}^{(k)}\mid{\bf X}^{(k-1)}+\ldots+{\bf X}^{(1)}\,\,\stackrel{d}{=}\,\,{X}_{i}^{(k)}\mid{X}_{i}^{(k-1)}+\ldots+{X}_{i}^{(1)}$\,;
	\end{itemize}
	
\noindent	that is, the conditional distribution of ${X}_{i}^{(k)}$ depends only on the $i$th coordinate of the sum of its predecessors. Then ${\bf X}^{(1)}+\ldots+{\bf X}^{(K)}$ is NLOD. Moreover, the result holds if we replace NLOD by NUOD and hence also by NOD.
\end{theorem}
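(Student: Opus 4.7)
I plan to proceed by induction on $K$ with ${\bf T}^{(k)}:=\sum_{j=1}^{k}{\bf X}^{(j)}$. The base case $K=1$ is immediate from~(i) because the conditioning set is empty, so ${\bf X}^{(1)}={\bf T}^{(1)}$ is itself NLOD. For the inductive step, assume ${\bf T}^{(K-1)}$ is NLOD, fix ${\bf s}\in\mathbb{R}^n$, and condition on ${\bf T}^{(K-1)}$. Applying the conditional NLOD in~(i) for $k=K$ inside the conditional expectation, and then collapsing each marginal factor by~(ii), yields
\[
P\bigl({\bf T}^{(K)}\le {\bf s}\bigr)\le E\Bigl[\prod_{i=1}^{n}\phi_i\bigl(T_i^{(K-1)}\bigr)\Bigr],\qquad \phi_i(t):=P\bigl(T_i^{(K)}\le s_i\mid T_i^{(K-1)}=t\bigr).
\]
Since $E[\phi_i(T_i^{(K-1)})]=P(T_i^{(K)}\le s_i)$, closing the induction reduces to
\[
E\Bigl[\prod_{i=1}^{n}\phi_i\bigl(T_i^{(K-1)}\bigr)\Bigr]\le \prod_{i=1}^{n}E\bigl[\phi_i\bigl(T_i^{(K-1)}\bigr)\bigr].
\]

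For the latter step I would first show that each $\phi_i$ is a non-negative \emph{decreasing} function of $t$, and then invoke the inductive hypothesis via a layer-cake argument. Writing $\phi_i(y)=\int_0^1\mathbf{1}_{\{\phi_i(y)\ge u\}}\,du$, decreasingness of $\phi_i$ makes each level set a half-infinite interval $(-\infty,a_i(u)]$; Fubini then reduces the target inequality to an integral over $u\in[0,1]^n$ of
\[
P\bigl(T_i^{(K-1)}\le a_i(u_i)\text{ for all } i\bigr)\le \prod_{i=1}^{n}P\bigl(T_i^{(K-1)}\le a_i(u_i)\bigr),
\]
which is exactly the NLOD inequality for ${\bf T}^{(K-1)}$ supplied by the inductive hypothesis. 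The NUOD (and hence NOD) case runs in parallel: the conditional NUOD version of~(i) leads to $\tilde\phi_i(t):=P(T_i^{(K)}>s_i\mid T_i^{(K-1)}=t)$ in place of $\phi_i$, which I would show to be \emph{increasing}, so that level sets take the form $[a_i(u),\infty)$ and NUOD of ${\bf T}^{(K-1)}$ from the parallel induction hypothesis closes the argument.

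The main obstacle is the monotonicity of $\phi_i$ in $t$, equivalent to stochastic monotonicity of the conditional law of $T_i^{(K)}=T_i^{(K-1)}+X_i^{(K)}$ given $T_i^{(K-1)}=t$. My plan is to derive this from the additive decomposition together with property~(ii): by~(ii), all $t$-dependence of the law of $X_i^{(K)}$ passes through the one-dimensional conditional $X_i^{(K)}\mid T_i^{(K-1)}=t$, so a downward drift of this one-parameter family of laws strong enough to cancel the $+t$ translation must be ruled out using the marginal consequences of~(i). In the knockout-tournament application of Section~\ref{sec:nonRknok}, where $X_i^{(K)}$ is the indicator of winning an additional round and is nonzero only when $t$ already records that player $i$ is still in the tournament, this monotonicity is transparent and the induction closes without further subtlety.
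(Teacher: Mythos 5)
Your plan reproduces the paper's argument almost step for step: condition on the partial sum, apply the conditional NLOD hypothesis (i) to factor the conditional probability, use (ii) to collapse each factor to a function $\phi_i$ of the single coordinate $T_i^{(K-1)}$, and close the induction by applying NLOD of ${\bf T}^{(K-1)}$ to $E\prod_i\phi_i\bigl(T_i^{(K-1)}\bigr)$. (The paper works with the equivalent characterization $E\prod_i\phi_i(Z_i)\le\prod_iE\phi_i(Z_i)$ for nonnegative decreasing $\phi_i$, which is exactly what your layer-cake step re-derives, so that part is fine.) The one step you leave open --- monotonicity in $t$ of $\phi_i(t)=P\bigl(T_i^{(K)}\le s_i\mid T_i^{(K-1)}=t\bigr)$ --- is precisely the step the paper disposes of with the words ``it is obviously nonnegative and decreasing,'' and your instinct to single it out as the main obstacle is sound: it is not obvious, and your proposed route of extracting it from (i) and (ii) cannot succeed, because (i) and (ii) do not imply it.

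Concretely, take $n=2$, $K=2$, ${\bf X}^{(1)}=(X_1,1-X_1)$ with $X_1\sim\mathrm{Bernoulli}(1/2)$ (NLOD, being a random permutation of $(0,1)$), and ${\bf X}^{(2)}=\bigl(10\cdot\mathbf{1}\{X_1=0\},\,0\bigr)$. Given ${\bf X}^{(1)}$ the second vector is degenerate, hence NLOD, and each of its coordinates depends only on the corresponding coordinate of ${\bf X}^{(1)}$, so (i) and (ii) hold; yet $T_1^{(2)}$ equals $10$ or $1$ according as $T_2^{(2)}$ equals $1$ or $0$, so the sum is comonotone and $P\bigl(T_1^{(2)}\le1,\,T_2^{(2)}\le0\bigr)=1/2>1/4$. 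Here $\phi_1(t)=P\bigl(T_1^{(2)}\le 1\mid T_1^{(1)}=t\bigr)$ satisfies $\phi_1(0)=0$ and $\phi_1(1)=1$, i.e.\ it is increasing. So the missing monotonicity is a genuine additional hypothesis --- stochastic monotonicity in $t$ of the law of $X_i^{(k)}$ given $T_i^{(k-1)}=t$ --- not a consequence of (i)--(ii); the theorem needs it, and the paper's proof tacitly assumes it. Your closing observation is the right repair: in the knockout application the conditional law of $X_i^{(k)}$ given $T_i^{(k-1)}=t$ is stochastically increasing in $t$ (a player with a larger running score is still alive and can only win more), so each $\phi_i$ is decreasing there and the induction closes. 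If you add that monotonicity as an explicit assumption, your proof is complete and is essentially the paper's.
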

\begin{proof} It is well known that a random vector  ${\bf Z}=(Z_1,\ldots,Z_n)$ is NLOD if and only if
	$E\prod_{i=1}^n \phi_i(Z_i) \le \prod_{i=1}^n E\phi_i(Z_i)$ for any nonnegative decreasing functions $\phi_i$ (Theorem 6.G.1 (b) in \cite{SS2007} or Theorem 3.3.16 in \cite{MS2002}).  The proof proceeds by induction, and it is easy to see that it suffices to prove it for $K=2$. Set  ${\bf X}:={\bf X}^{(1)}$ and ${\bf Y}:={\bf X}^{(2)}$. We have
	$$E\prod_{i=1}^n \phi_i(X_i+Y_i)=E\{E[\,\prod_{i=1}^n \phi_i(X_i+Y_i)\, |\, {\bf X}\,]\}\le E\prod_{i=1}^n E[\phi_i(X_i+Y_i)\mid {\bf X}]= E[\prod_{i=1}^n g_i(X_i)],$$ where $g_i(X_i)=E[ \phi_i(X_i+Y_i)\mid {\bf X}]$, and the inequality holds by Assumption {\rm{(\bf i)}}. By {\rm{(\bf ii)}} we have that $g_i(X_i)$ indeed depends only on $X_i$, and it is obviously nonnegative and decreasing.  By the NLOD property of ${\bf X}$ we have  $$E\prod_{i=1}^n g_i(X_i) \le \prod_{i=1}^n Eg_i(X_i)=\prod_{i=1}^n E\phi_i(X_i+Y_i),$$
	and the result follows. The same proof holds for NUOD with the functions $\phi_i$ taken to be increasing.
\end{proof}

A special case of Theorem \ref{th:presNOD} is the following corollary that for nonnegative vectors follows from Theorem 6.G.19 of \cite{SS2007} and can be obtained from Theorem 1 of \cite{Leh1966} (for vectors in $\mathbb{R}^2$) and from Theorem 4.2 (e) of \cite{Mull19977}.
\begin{corollary}
The sum of independent NOD (NLOD, NUOD) vectors is NOD (NLOD, NUOD).
	\end{corollary}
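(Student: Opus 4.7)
The plan is to derive the corollary as an immediate consequence of Theorem \ref{th:presNOD}, by checking that both of its hypotheses are automatically satisfied when the vectors $\mathbf{X}^{(k)}$ are genuinely independent (not merely internally negatively dependent). The only work is bookkeeping with conditional distributions under independence.

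First I would verify hypothesis (\textbf{i}). Since $\mathbf{X}^{(k)}$ is independent of $\mathbf{X}^{(1)}+\cdots+\mathbf{X}^{(k-1)}$, the conditional distribution of $\mathbf{X}^{(k)}$ given this partial sum coincides with the unconditional distribution of $\mathbf{X}^{(k)}$, which is NLOD by assumption. Next, for hypothesis (\textbf{ii}), the same independence forces both $X_i^{(k)}\mid \mathbf{X}^{(1)}+\cdots+\mathbf{X}^{(k-1)}$ and $X_i^{(k)}\mid X_i^{(1)}+\cdots+X_i^{(k-1)}$ to reduce to the marginal law of $X_i^{(k)}$, so the distributional equality is trivial.

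With both hypotheses in place, I would invoke the NLOD conclusion of Theorem \ref{th:presNOD} to deduce that $\mathbf{X}^{(1)}+\cdots+\mathbf{X}^{(K)}$ is NLOD. For the NUOD version, I would apply the ``moreover'' clause of Theorem \ref{th:presNOD} (noting that the same verification of (\textbf{i}) and (\textbf{ii}) goes through verbatim with NLOD replaced by NUOD, since these are both properties of the unconditional distribution of $\mathbf{X}^{(k)}$). The NOD statement follows by combining the two.

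There is no real obstacle here: the corollary is essentially the statement that the independence assumption is the strongest possible form of the conditional-independence-type hypotheses of Theorem \ref{th:presNOD}, and under independence the ``mixture'' step in the proof of that theorem becomes trivial. The only point worth flagging in writing is that the independence between the vectors $\mathbf{X}^{(k)}$ is used across $k$, while the NLOD/NUOD structure is used within each $k$; these two pieces interact exactly as required by Theorem \ref{th:presNOD}.
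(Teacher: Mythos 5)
Your proof is correct and takes essentially the same route as the paper, which simply presents the corollary as ``a special case of Theorem \ref{th:presNOD}''; your verification that independence reduces hypotheses (\textbf{i}) and (\textbf{ii}) to the unconditional NLOD/NUOD assumptions and to a trivial distributional identity is exactly the implicit content of that claim.
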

\begin{proposition}
	For the knockout tournament with a non-random draw, the vector $\textbf{S}=(S_1,\ldots,S _n)$ is NOD.
\end{proposition}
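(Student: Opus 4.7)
The plan is to cast the knockout tournament as a sum of round-vectors and then apply Theorem \ref{th:presNOD}. For $k=1,\ldots,\ell$ (where $n=2^\ell$), let $X_i^{(k)}$ denote the indicator that player $i$ wins their match in round $k$, with the convention $X_i^{(k)}=0$ if player $i$ was eliminated in some earlier round. Then $\mathbf{X}^{(k)}=(X_1^{(k)},\ldots,X_n^{(k)})$ and $S_i=\sum_{k=1}^\ell X_i^{(k)}$, so $\mathbf{S}=\mathbf{X}^{(1)}+\cdots+\mathbf{X}^{(\ell)}$. It then suffices to verify the two hypotheses of Theorem \ref{th:presNOD}.

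For hypothesis (ii), the crucial observation is that player $i$ participates in round $k$ if and only if they have won all their previous matches, i.e., if and only if $X_i^{(1)}+\cdots+X_i^{(k-1)}=k-1$; otherwise $X_i^{(k)}=0$ deterministically. When player $i$ does participate, the equal-strength assumption makes $X_i^{(k)}$ a $\mathrm{Bernoulli}(1/2)$ variable independent of the entire history, regardless of whom they face. Thus the conditional law of $X_i^{(k)}$ given $\mathbf{X}^{(1)}+\cdots+\mathbf{X}^{(k-1)}$ depends on that vector only through its $i$th coordinate, which is exactly condition (ii).

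For hypothesis (i), observe that with a fixed (non-random) draw the bracket determines, as a function of the history alone, who is alive after round $k-1$ and who is paired with whom in round $k$. Conditioning on the past partial sum, the vector $\mathbf{X}^{(k)}$ therefore splits into (a) deterministic zero entries for already-eliminated players, and (b) several independent pairs $(X_i^{(k)},X_j^{(k)})$ with $X_i^{(k)}+X_j^{(k)}=1$, one pair per round-$k$ match. Each such pair is NA (one coordinate is a decreasing function of the other), independent pairs of NA variables concatenated with constants remain NA by properties $P_7$ and $P_6$ of \cite{JP1983}, and NA implies both NLOD and NUOD. This gives hypothesis (i) in both the NLOD and the NUOD forms.

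Applying Theorem \ref{th:presNOD} in its NLOD version and then in its NUOD version to $\mathbf{X}^{(1)},\ldots,\mathbf{X}^{(\ell)}$ yields that $\mathbf{S}$ is NOD. The main conceptual subtlety I expect to check carefully is hypothesis (ii): it might appear that the conditional distribution of $X_i^{(k)}$ depends on the fate of the other players (since this affects the identity of player $i$'s potential opponent), but the equal-strength assumption makes the opponent's identity irrelevant for the conditional law, which is why only the $i$th coordinate of the past sum is needed. Once this observation is made, both hypotheses fall out cleanly and the corollary-style application of Theorem \ref{th:presNOD} closes the argument.
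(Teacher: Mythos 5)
Your proof is correct and takes essentially the same route as the paper's: decompose $\mathbf{S}$ into the per-round win-indicator vectors $\mathbf{X}^{(k)}$ (with zeros for eliminated players) and apply Theorem \ref{th:presNOD}. You verify hypotheses (i) and (ii) in more detail than the paper, which merely asserts they are ``easily seen to hold,'' and your identification of the equal-strength assumption as the reason condition (ii) survives the opponent's identity being history-dependent is exactly the right point to flag.
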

\begin{proof}
	Without loss of generality assume that in the first round player $2i-1$ plays against $2i$ for $i=1,\ldots, n/2$. Let $X^{(1)}_{j}=0\,\, (1)$ if player $j$
	loses (wins) the first round, $j=1,\ldots,n$. The pairs of variables $X_{2i-1}^{(1)},X_{2i}^{(1)}$ are independent and NOD (in fact they are NA), taking the values $(0,1)$ or $(1,0)$
with probability 1/2. It follows readily that the 0-1 vector ${\bf X}^{(1)}=\left(X_{1}^{(1)},\ldots,X_{n}^{(1)}\right)$, whose $j$th coordinate indicates a win or a loss of player $j$ in the first round,  is NOD. Now the second round is similar with only half the players, those who won the first round, where the value 0 is set for players who lost in the first round. Continuing this way, we see that the vector $(S_1,\ldots,S_n)$ is the sum of the 0-1 vectors of all the rounds. These vectors are not independent because the value of 0 in a coordinate of a vector pertaining to a given round must by followed by a zero there in the next round. However, \rm{(\bf i)} and \rm{(\bf ii)} of Theorem \ref{th:presNOD} are easily seen to hold, and the NOD property follows.
\end{proof}

\ack % Place the text of your acknowledgements after the \acks (or \Acks) command. This will generate the heading "Acknowledgements". If you wish to make only one acknowledgement, use \ack (or \Ack).
\noindent We wish to thank the Editors and two referees for helpful comments and suggestions.  We also thank Alfred  M\"{u}ller for suggesting sport examples and references, and for a thoughtful discussion of the paper, and Fabio Spizzichino for many useful comments.

\fund % Place any funding information for this work after the \fund (or \Fund) command.
\noindent The research of YM was supported by grant no. 2020063 from the United States--Israel Binational
Science Foundation (BSF). YR was supported in part by a grant from the Center for Interdisciplinary Data Science Research at the Hebrew University (CIDR).

\competing There were no competing interests to declare that arose during the preparation or publication process of this article.

\noindent
%% Use this section to describe any competing interests to declare related to this article. If there are no competing interests to declare in this section, please say
%There were no competing interests to declare which arose during the preparation or publication process of this article.

%%\data % Place any information on data related to the work in your article after the \data (or \Data) command. Omit this %%command/section and text if it is not relevant to your article.
%%\noindent The data related to the simulations found in Section 2 can be found at...

%%\supp \noindent The supplementary material for this article can be found at http://doi.org/10.1017/[TO BE SET]. % Delete this line if there are no supplementary files related to this article. If there are supplementary files related to your article, leave the line unchanged.

%%%%%%%%%%%%Reference list%%%%%%%%%%%%%%
%
% References should be in the following form (or the BibTeX file
% apt.bst should be used):
%
% For a journal:
% Surname, Initial (year). Title of paper. {\em Journal title}
% {\bf Vol,} page--range.
%
% For a book:
% Surname, Initial (year). {\em Book title}. Publisher, Address.
%
% Note the following example of a reference list.

\end{document}